\newtheorem{theorem}{Theorem}
\newtheorem{lemma}[theorem]{Lemma}
\newtheorem{proposition}[theorem]{Proposition}
\newtheorem{corollary}[theorem]{Corollary}
\newtheorem{definition}[theorem]{Definition}
\newtheorem{alg}[theorem]{Algorithm}
\theoremstyle{remark}
\newtheorem{remark}[theorem]{Remark}
\newtheorem*{example}{Example}
\numberwithin{theorem}{section}
\numberwithin{equation}{section}
\newcommand{\Z}{\mathbb{Z}}
\newcommand{\Q}{\mathbb{Q}}
\newcommand{\isom}{\mathbb{\cong}}
\newcommand{\calE}{\mathcal{E}}
\newcommand{\calo}{\mathfrak{o}}
\newcommand{\calO}{\mathcal{O}}
\newcommand{\calI}{\mathcal{I}}
\newcommand{\frakp}{\mathfrak{p}}
\newcommand{\frakq}{\mathfrak{q}}
\newcommand{\frakA}{\mathfrak{A}}
\newcommand{\frakP}{\mathfrak{P}}
\newcommand{\Diff}{\mathfrak{D}}
\newcommand{\tensor}{\otimes}
\newcommand{\ol}{\overline}
\DeclareMathOperator*{\bigperp}{\text{\Large $\perp$}}
\DeclareMathOperator{\ord}{ord}
\DeclareMathOperator{\scale}{s}
\DeclareMathOperator{\norm}{n}
\DeclareMathOperator{\Aut}{Aut}
\DeclareMathOperator{\Nr}{Nr}
\DeclareMathOperator{\Tr}{Tr}
\DeclareMathOperator{\End}{End}
\DeclareMathOperator{\GL}{GL}
\DeclareMathOperator{\SL}{SL}
\DeclareMathOperator{\GU}{U}
\DeclareMathOperator{\SU}{SU}
\DeclareMathOperator{\rank}{rank}
\DeclareMathOperator{\gen}{gen}
\DeclareMathOperator{\sgen}{sgen}
\DeclareMathOperator{\Hom}{Hom}
\algnewcommand{\IIf}[1]{\State\algorithmicif\ #1\ \algorithmicthen}
\algnewcommand{\EndIIf}{\unskip\ \algorithmicend\ \algorithmicif}
\begin{document}
\bibliographystyle{abbrv}

\title{Determinant groups of hermitian lattices over local fields}
\author{Markus Kirschmer}
\address{Lehrstuhl D f\"ur Mathematik, RWTH Aachen University, Pontdriesch 14/16, 52062 Aachen, Germany}
\email{markus.kirschmer@math.rwth-aachen.de}
\thanks{The research is supported by the DFG within the framework of the SFB TRR 195.}

\begin{abstract}
We describe the determinants of the automorphism groups of hermitian lattices over local fields.
Using a result of G.~Shimura, this yields an explicit method to compute the special genera in a given genus of hermitian lattices over a number field.
\end{abstract}

\subjclass{Primary 11E39; Secondary 15A15, 15B57}
\keywords{hermitian lattice, genus, special genus, determinant}

\maketitle

\section{Introduction}

An important method to study global fields such as algebraic number fields is to pass to the completions, which are local fields.
In case of a number field, the possible completions are the fields of real or complex numbers as well as the p-adic number fields.
For such fields many problems are much  easier to solve.
The famous local-global principle relates properties of global fields to the respective properties of all its completions.

A classical result known as the Hasse principle shows that quadratic or hermitian spaces over a global field $K$ are isometric if and only if they are isometric over all completions of $K$.
The Hasse principle fails to hold for the analogous arithmetic question, i.e. isometry of lattices over number rings.
This motivates the definition of the genus of a quadratic or hermitian lattice~$L$ as the set of all lattices which are isometric to $L$ locally everywhere.
A genus always decomposes into finitely many isometry classes and it is an important algorithmic task to make this decomposition explicit.

To that end, one considers an equivalence relation which is finer then being in the same genus but coarser than being isometric.
For quadratic lattices, these intermediate equivalence classes are called spinor genera. The analogue for hermitian lattices were dubbed special genera in \cite{Schiemann}.
For lattices in indefinite spaces, strong approximation implies that the spinor or special genera consist of a single isometry class.
In the case of definite spaces, the decomposition of spinor or special genera into isometry classes can be achieved by computing iterated neighbours as defined by Kneser, cf. Remark~\ref{rem:split}.

So it remains to give a description of the spinor or special genera in a given genus.
M.~Kneser \cite{KneserSpinor} answered this question for quadratic lattices.
His result depends on the local spinor norm groups of these lattices, which he computed at all non-dyadic places.
C.~N.~Beli \cite{beli} worked out the local spinor norms at the dyadic places, which makes Kneser's answer explicit.

For hermitian lattices, the question was answered by G.~Shimura in \cite{Shimura}.
For a lattice $L$ of odd rank, his result only depends of the class group of the underlying number field.
If $L$ has even rank however, the answer also depends on the determinants of the local automorphism groups of $L$ and he worked out those groups at all but the ramified dyadic places.

Theorem \ref{main}, which is the main result of this note, gives these local determinant groups at all places.
With Shimura's result and the local determinant groups at hand, Section 4 gives an algorithm to compute representative lattices for the special genera in any given genus of hermitian lattices.

\section{Hermitian spaces}
In this section, we collect some basic results and definitions on lattices in hermitian spaces.
Let $\calo$ be a Dedekind ring with field of fractions $K$ such that the characteristic of $K$ is different from $2$.
Further, let $E = K[X]/(X^2-a)$ be an etale \mbox{$K$-algebra} of dimension $2$ and let $\ol{\phantom{x}} \colon E \to E$ be the $K$-linear automorphism of $E$ with fixed field $K$.
We denote by 
\[ \Nr \colon E \to K, \alpha \mapsto \alpha \ol{\alpha} \quad\mbox{and} \quad  \Tr\colon E \to K, \alpha \mapsto \alpha + \ol{\alpha} \]
the norm and the trace of the $K$-algebra $E$.
Let 
\[ 
\calO = \{ \alpha \in E \mid \Tr(\alpha) \in \calo \mbox{ and } \Nr(\alpha) \in \calo \}
\]
be the integral closure of $\calo$ in $E$ and denote by 
\[ \Diff^{-1}=  \{ \alpha \in E \mid \Tr(\alpha \calO) \subseteq \calo \} \]
the inverse different of $\calO$ over $\calo$.

Let $(V, \Phi)$ be a hermitian space over $E$, i.e. a finitely generated free \mbox{$E$-module}~$V$ and a map
$ \Phi \colon V\times V \to E $
such that for all $v,v', w,w' \in V$ and $\alpha, \beta \in E$ the following conditions hold:
\begin{align*}
  \Phi(\alpha v + \beta v', w) &= \alpha \Phi(v, w) + \beta \Phi(v', w) \:, \\
  \Phi(v, \alpha w + \beta w') &= \ol{\alpha} \Phi(v, w) + \ol{\beta} \Phi(v, w') \:,\\
  \Phi(w,v) &= \ol{\Phi(v,w)} \:.
\end{align*}
The unitary and special unitary groups of $(V, \Phi)$ are
\begin{align*}
\GU(V, \Phi) &= \{\varphi \in \GL(V) \mid \Phi(\varphi(x), \varphi(y)) = \Phi(x,y) \mbox{ for all } x,y \in V \} \mbox{ and }\\
\SU(V, \Phi) &= \GU(V, \Phi) \cap \SL(V)
\end{align*}
Given a basis $(x_1,\dots,x_m)$ of $(V,\Phi)$ over $E$, let $ ( \Phi(x_i, x_j) )_{i,j} \in E^{m \times m}$ be the associated Gram matrix and
\[ \det(V,\Phi) = \det ( \Phi(x_i, x_j) )_{i,j} \in K^* / \Nr(E^*) \]
be the determinant of $(V,\Phi)$. It does not depend on the chosen basis.

An $\calO$-lattice $L$ in $V$ is a finitely generated $\calO$-submodule in $V$ that contains a $K$-basis of $V$.
The fractional $\calO$-ideals
\[ \scale(L)= \{ \Phi(x,y) \mid x,y \in L \} \quad \mbox{and} \quad \norm(L)= \sum_{x \in L} \Phi(x,x) \calO  \]
are called the scale and the norm of $L$ respectively.
For $x,y \in L$ we have 
\[ \Tr(\Phi(x,y)) = \Phi(x+y, x+y) - \Phi(x,x) - \Phi(y,y) \in \norm(L) \]
and thus $\Phi(x,y) \in \Diff^{-1} \cdot \norm(L)$.
This shows that 
\begin{equation}\label{eq:normscale}
 \scale(L) \subseteq \Diff^{-1} \cdot \norm(L) \subseteq \Diff^{-1} \cdot \scale(L) \:. 
\end{equation}

\begin{definition}
Let $L$ be an $\calO$-lattice in $V$.
\begin{enumerate}
\item The lattice $L$ is called maximal if $\norm(L) \subsetneq \norm(M)$ for any $\calO$-lattice $L \subsetneq M$.
\item The dual of $L$ is the $\calO$-lattice $L^\# = \{ x \in V \mid \Phi(x, L) \subseteq \calO \}$.
\item
If $L = \frakA L^\#$ for some fractional ideal $\frakA$  of $\calO$ with $\frakA = \ol{\frakA}$
then $L$ is called $\frakA$-modular (or simply modular).
\end{enumerate}
\end{definition}

\begin{definition}
The unitary group $\GU(V, \Phi)$ acts on the set of all $\calO$-lattices in~$V$.
Two $\calO$-lattices $L$ and $L'$ in $V$ are said to be isometric, denoted by $L \isom L'$, if they lie in the same orbit under $\GU(V, \Phi)$.
The automorphism group $\Aut(L)$ is the stabilizer of $L$ in $\GU(V, \Phi)$.
\end{definition}

\section{Hermitian lattices over local rings}

Let $K, E, \calo, \calO$ and $(V, \Phi)$ be as in Section 2.
We now assume $\calo$ to be the valuation ring of a complete discrete surjective valuation $\ord \colon K \to \Z \cup \{\infty \}$.
The maximal ideal of $\calo$ will be denoted by $\frakp$.
The purpose of this section is to describe the determinants of the automorphisms of $\calO$-lattices in $V$.

If $E \isom K \oplus K$ is split, let $\frakP = \frakp \calO$ otherwise let $\frakP$ be the maximal ideal of $\calO$.
In both cases, $\frakP$ is the largest proper ideal of $\calO$ over $\frakp$ that is invariant under $\ol{\phantom{x}}$.
Hence $\Diff = \frakP^e$ for some integer $e \ge 0$.\\
If $E/K$ is a ramified field extension, we need to distinguish two cases:
\begin{enumerate}
\item $E \isom K[X]/(X^2-p)$ for some prime element $p \in \calo$. Then $\calO$ contains a prime element $\pi$ such that $\ol{\pi} = -\pi$.
In this case $\calO = \calo \oplus \pi \calo$ and $\Diff = 2 \frakP$. In particular, $e = 2 \ord(2) + 1$ is odd.
\item $E \isom K[X]/(X^2-u)$ for some unit $u \in \calo^*$. Section 63A of \cite{OMeara} shows that one may assume that $\ord(u-1) = 2k+1$ for some integer $1 \le k \le \ord(2)$.
Then $\calO = \calo \oplus \alpha \calo$ for some element $\alpha \in \calO$ such that $\alpha^2 = u$.
Let $p \in \calo$ be a prime element. Then $\pi = p^{-k}(1+\alpha)$ is a prime element of $\calO$ and $\Diff = 2 \frakP^{-2k}$. In particular, $e = 2 \ord(2) - 2k$ is even.
\end{enumerate}
Note that the second case can only occur if $E/K$ is wildly ramified, i.e. $E/K$ is ramified and $\ord(2) > 0$.

\begin{lemma}\label{olDiff}
Suppose $E/K$ is ramified. Then $\Diff = \sum_{\gamma \in \calO} (\gamma - \ol{\gamma}) \calO = (\pi - \ol{\pi})\calO$ for any prime element $\pi \in \calO$.
\end{lemma}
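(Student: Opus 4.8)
The plan is to reduce everything to the fact that a ramified quadratic extension is totally ramified, hence monogenic in any uniformizer. Since $E/K$ is a ramified field extension with $[E:K]=2$, its ramification index is $2$, so $E/K$ is totally ramified with residue degree $1$. Consequently, for \emph{any} prime element $\pi\in\calO$ one has $\pi\notin K$ (elements of $K$ have even valuation in $E$), so in particular $\ol\pi\ne\pi$, the minimal polynomial of $\pi$ over $K$ is the Eisenstein polynomial $g(X)=(X-\pi)(X-\ol\pi)=X^2-\Tr(\pi)X+\Nr(\pi)$, and $\calO=\calo[\pi]=\calo\oplus\pi\calo$. I would record at the outset that $\calo=K\cap\calO$ is fixed elementwise by $\ol{\phantom{x}}$, since $\calo\subseteq K$ and $K$ is the fixed field. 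This uniform description covers both ramified cases (1) and (2) at once and is what lets me avoid a separate treatment of the wildly ramified situation.

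Write $\fraka:=\sum_{\gamma\in\calO}(\gamma-\ol\gamma)\calO$ for the middle ideal. First I would identify $\fraka=(\pi-\ol\pi)\calO$. The inclusion $(\pi-\ol\pi)\calO\subseteq\fraka$ is immediate from $\pi\in\calO$. For the reverse inclusion, every $\gamma\in\calO$ has the form $\gamma=a+b\pi$ with $a,b\in\calo$; since $a,b$ are fixed by conjugation, $\gamma-\ol\gamma=b(\pi-\ol\pi)$, so $(\gamma-\ol\gamma)\calO\subseteq(\pi-\ol\pi)\calO$ for each $\gamma$ and hence $\fraka\subseteq(\pi-\ol\pi)\calO$. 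In particular $\fraka$ does not depend on the chosen prime element, consistent with the assertion of the lemma.

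It remains to show $\Diff=(\pi-\ol\pi)\calO$. The quickest route is the standard formula for the different of a monogenic extension: since $\calO=\calo[\pi]$ one has $\Diff=g'(\pi)\calO$, and $g'(\pi)=2\pi-\Tr(\pi)=2\pi-(\pi+\ol\pi)=\pi-\ol\pi$. If one prefers to argue from the definition $\Diff^{-1}=\{x\in E\mid \Tr(x\calO)\subseteq\calo\}$ directly, I would exhibit the trace-dual basis of $\{1,\pi\}$: using $\Tr\bigl(\tfrac{1}{\pi-\ol\pi}\bigr)=0$ and $\Tr\bigl(\tfrac{\pi}{\pi-\ol\pi}\bigr)=1$ (together with $\Nr(\pi)\in K$), one checks that $\{\tfrac{-\ol\pi}{\pi-\ol\pi},\tfrac{1}{\pi-\ol\pi}\}$ is dual to $\{1,\pi\}$, whence $\Diff^{-1}=\tfrac{1}{\pi-\ol\pi}(\calo\oplus\ol\pi\calo)=\tfrac{1}{\pi-\ol\pi}\calO$ and $\Diff=(\pi-\ol\pi)\calO$. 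Combining the three computations yields $\Diff=\fraka=(\pi-\ol\pi)\calO$.

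Finally I would sanity-check against the explicit data of the excerpt: in case (1), $\ol\pi=-\pi$ gives $(\pi-\ol\pi)\calO=2\pi\calO=2\frakP=\Diff$; in case (2), $\pi-\ol\pi=2p^{-k}\alpha$ has $E$-valuation $2\ord(2)-2k=e$, matching $\Diff=2\frakP^{-2k}$. The main obstacle is really the first step: one must know that $\calO=\calo[\pi]$ holds for an \emph{arbitrary} prime element, uniformly across the tame and wild cases, rather than only for the distinguished generators appearing in the case analysis. Once monogenicity in a uniformizer is in hand, both remaining identities are short and purely formal.
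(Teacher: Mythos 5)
Your proof is correct, but it takes a genuinely different route from the paper's. The paper works with the \emph{distinguished} prime element $\pi$ constructed in the case analysis preceding the lemma: it verifies by hand (using the explicit values $\Diff=2\frakP$, resp.\ $\Diff=2\frakP^{-2k}$) that $\pi-\ol{\pi}$ generates $\Diff$ in each of the two ramified cases, uses $\calO=\calo\oplus\pi\calo$ to get $\gamma-\ol{\gamma}=b(\pi-\ol{\pi})$ for every $\gamma=a+b\pi\in\calO$, and then handles an \emph{arbitrary} prime element $\gamma$ by a valuation-parity argument showing $\ord(a)>\ord(b)=0$, i.e.\ $b\in\calo^*$, so that $(\gamma-\ol{\gamma})\calO=(\pi-\ol{\pi})\calO$. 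You instead invoke two standard facts about totally ramified extensions of complete discrete valuation rings: monogenicity $\calO=\calo[\pi]$ in \emph{any} uniformizer, and the different formula $\Diff=g'(\pi)\calO$ for the (Eisenstein) minimal polynomial $g$, which immediately gives $g'(\pi)=2\pi-\Tr(\pi)=\pi-\ol{\pi}$; your trace-dual-basis computation is a correct self-contained substitute for the latter. What your approach buys is uniformity: no case distinction between the tame and wildly ramified situations and no reduction from an arbitrary prime element to a distinguished one, since every uniformizer is treated symmetrically from the start. What the paper's approach buys is self-containedness: it leans only on the explicit computations of $\Diff$ already carried out in the two cases just before the lemma, rather than importing the general monogenicity and conductor-different theorems (your concluding ``sanity check'' against the two cases essentially reproduces the paper's first step). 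Both arguments are complete; the one standard input you must be sure of is that $\calO=\calo[\pi]$ holds for an arbitrary uniformizer in this generality (no assumption on the residue field), which it does, since the proof of that fact uses only completeness and the distinctness of the valuations of $1$ and $\pi$ modulo the ramification index.
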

\begin{proof}
Suppose first that $\pi$ is the prime element form the discussion just before this lemma.
One checks that $\pi - \ol{\pi}$ generates $\Diff$ in both cases.
The fact that $\calO = \calo \oplus \pi \calo$ shows that any $\gamma \in \calO$ is of the form $\gamma = a + b \pi$ with $a,b \in \calo$.
Then
\[ \gamma - \ol{\gamma} = (a + b \pi) - (a+b\ol{\pi}) = b (\pi - \ol{\pi}) \in \Diff \:. \]
Suppose now $\gamma$ is a prime element of $\calO$. Then $\ord(a) > \ord(b) = 0$ and thus
$(\gamma - \ol{\gamma})\calO = (\pi - \ol{\pi}) \calO = \Diff$. 
\end{proof}

The isometry classes of $\calO$-lattices in $V$ were described by R.~Jacobowitz in \cite{Jacobowitz}, see also \cite{Johnson}.
This classification is not needed for our purposes. We only make us of the following two results.

\begin{proposition}\label{prop1}
Any $\calO$-lattice $L$ in $V$ has a decomposition $L = \bigperp_{i=1}^r L_i$ into modular sublattices $L_i$ of rank $1$ or $2$.
\end{proposition}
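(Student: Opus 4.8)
The plan is to argue by induction on the rank $m$ of $L$, at each step splitting off an orthogonal modular summand of rank $1$ or $2$. Since $\calO$ is local, the conjugation-invariant fractional ideal $\scale(L)$ is principal; I fix a generator $s$, so that $\ord(s)$ is the minimal valuation attained by the values $\Phi(x,y)$ with $x,y \in L$. By \eqref{eq:normscale} we have $\Diff \cdot \scale(L) \subseteq \norm(L) \subseteq \scale(L)$, and the two cases below are distinguished by whether or not $\norm(L) = \scale(L)$.

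First I would treat the case $\norm(L) = \scale(L)$. Then some $x \in L$ satisfies $\Phi(x,x)\calO = \scale(L)$, so $M := \calO x$ has rank $1$. A direct computation of the dual inside $Ex$ gives $M^\# = \Phi(x,x)^{-1}\calO x = \scale(L)^{-1} M$, whence $M$ is $\scale(L)$-modular (note $\scale(L) = \ol{\scale(L)}$). To see that $M$ splits off, observe that for every $v \in L$ we have $\Phi(v,x) \in \scale(L) = \Phi(x,x)\calO$, so the coefficient $c = \Phi(v,x)/\Phi(x,x)$ lies in $\calO$ and $v - cx \in L$ is orthogonal to $x$. Hence $L = M \bigperp (L \cap x^\perp)$ and induction applies to the complement.

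The remaining case is $\norm(L) \subsetneq \scale(L)$, where every $\Phi(x,x)$ has valuation strictly larger than $\ord(s)$. Here I would pick $x,y \in L$ with $\Phi(x,y) = s$ and set $M := \calO x + \calO y$. A short valuation argument shows $x$ and $y$ are $E$-linearly independent: if $y \in Ex$, then both $\Phi(x,y)$ and $\Phi(y,y)$ would be expressible through $\Phi(x,x)$, contradicting either the strict inequality or the minimality of $\ord(s)$. Thus $M$ has rank $2$ with Gram matrix $G = \left(\begin{smallmatrix} \Phi(x,x) & s \\ \ol{s} & \Phi(y,y)\end{smallmatrix}\right)$, in which the off-diagonal entry dominates, so that $\det G$ generates $s\ol{s}\,\calO = \scale(L)^2$. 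Inspecting $G^{-1}$ then shows $M^\# = \scale(L)^{-1} M$, i.e. $M$ is $\scale(L)$-modular of rank $2$.

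It remains to split $M$ off orthogonally, which I expect to be the main obstacle, since it is exactly here that the failure of diagonalisation in the ramified (wildly dyadic) setting forces the appearance of rank-$2$ blocks. For $v \in L$ one must solve $G\,(c_1,c_2)^t = (\Phi(v,x),\Phi(v,y))^t$ with $(c_1,c_2) \in \calO^2$; as the right-hand side lies in $\scale(L)$ and the entries of $G^{-1}$ lie in $\scale(L)^{-1}$, the solution is integral, so $v - c_1 x - c_2 y \in L \cap M^\perp$ and $L = M \bigperp (L \cap M^\perp)$. Finally, in the split or unramified case $\Diff = \calO$, so \eqref{eq:normscale} forces $\norm(L) = \scale(L)$ and only rank-$1$ blocks occur; the rank-$2$ blocks are genuinely needed only in the ramified case. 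Iterating the splitting on the complement $L \cap M^\perp$, whose rank has dropped by $1$ or $2$, yields the desired decomposition $L = \bigperp_{i=1}^r L_i$.
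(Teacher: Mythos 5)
The paper does not prove this statement at all: it simply cites \cite[Proposition 4.3]{Jacobowitz}. Your argument is a correct, self-contained proof along the standard lines of that cited result: split off a rank-one $\norm(L)$-modular summand generated by a norm generator when $\norm(L)=\scale(L)$, and otherwise a rank-two $\scale(L)$-modular summand on a pair $x,y$ with $\Phi(x,y)$ generating $\scale(L)$, then induct on the orthogonal complement. All the key steps check out: the linear independence of $x,y$ in the second case follows exactly as you say (if $y=\alpha x$, then $\ord_\frakP(\alpha)\ge 0$ contradicts $\norm(L)\subsetneq\scale(L)$, while $\ord_\frakP(\alpha)<0$ makes $\Phi(y,y)$ violate the minimality of $\ord_\frakP(s)$); the off-diagonal dominance gives $\det G\cdot\calO=\scale(L)^2$, so $s\,G^{-1}$ has entries in $\calO$ and unit determinant, which is precisely what is needed both for $M=\scale(L)M^{\#}$ and for the integrality of $(c_1,c_2)=(\Phi(v,x),\Phi(v,y))G^{-1}$. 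Two cosmetic points: in the split case $\calO\isom\calo\oplus\calo$ is not local, only a product of two discrete valuation rings, but it is still a principal ideal ring and, as you note, $\Diff=\calO$ forces $\norm(L)=\scale(L)$ there, so the rank-two case genuinely only arises when $E/K$ is a ramified field extension and $\calO$ is a DVR; and one should observe that a generator of $\scale(L)$ is actually attained as a value $\Phi(x,y)$ (clear in the DVR case, since the minimal $\frakP$-valuation among the values is attained). Compared with the paper's bare citation, your proof buys self-containedness at essentially no cost, and it makes transparent exactly where and why rank-two blocks are unavoidable.
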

\begin{proof}
See for example \cite[Proposition 4.3]{Jacobowitz}.
\end{proof}

If $E/K$ is ramified, let $\pi \in \calO$ be the prime element from above.
For $i \in \Z$, we denote by $H(i)$ a binary hermitian lattice over $\calO$ with Gram~matrix 
\[ \begin{pmatrix} 0 & \pi^i \\ \ol{\pi}^i & 0 \end{pmatrix} \:. \]
For an integer $r \ge 0$ let $H(i)^r$ denote the orthogonal sum of $r$ copies of $H(i)$.

\begin{proposition}\label{prop2}
Suppose $E/K$ is ramified and let $L$ be a $\frakP^i$-modular hermitian $\calO$-lattice of rank $m$.
Then $\norm(L) = \Diff \scale(L)$ if and only if $m$ is even, $i \equiv e \pmod{2}$ and $L \isom H(i)^{m/2}$.
\end{proposition}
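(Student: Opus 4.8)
The plan is to reduce everything to two facts about the trace. First, since $\Phi(x,x)=\ol{\Phi(x,x)}\in K$ for every $x$, the ideal $\Phi(x,x)\calO=\frakp^{\ord(\Phi(x,x))}\calO$ is an \emph{even} power of $\frakP$ (recall $\frakp\calO=\frakP^2$ as $E/K$ is ramified), so $\norm(L)$ is always an even power of $\frakP$. Second, the elementary identity $\Tr(\frakP^j)=\frakp^{\lfloor (j+e)/2\rfloor}$, which follows from $\Diff^{-1}=\frakP^{-e}$ together with $\frakp\calO=\frakP^2$. I would begin by recording that a $\frakP^i$-modular lattice has $\scale(L)=\frakP^i$: since the dual pairing $L\times L^\#\to\calO$ is unimodular, $\Phi(L,L^\#)=\calO$, whence $\scale(L)=\Phi(\frakP^iL^\#,L)=\frakP^i\Phi(L^\#,L)=\frakP^i$. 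Thus the asserted identity reads $\norm(L)=\frakP^{e+i}$, and by~\eqref{eq:normscale} one always has $\frakP^{e+i}=\Diff\scale(L)\subseteq\norm(L)$, so only the inclusion $\norm(L)\subseteq\frakP^{e+i}$ is at issue.

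For the direction $\Leftarrow$ I would simply evaluate the two invariants on $H(i)^{m/2}$. The off-diagonal entries $\pi^i,\ol\pi^i$ generate $\frakP^i$, so $\scale(H(i))=\frakP^i$; and since $\Phi(av_1+bv_2,av_1+bv_2)=\Tr(a\ol b\,\pi^i)$ ranges over $\Tr(\frakP^i)=\frakp^{\lfloor(i+e)/2\rfloor}$, one gets $\norm(H(i))=\frakP^{2\lfloor(i+e)/2\rfloor}$. When $i\equiv e\pmod 2$ this equals $\frakP^{i+e}=\Diff\scale(H(i))$, and taking orthogonal sums gives the claim for $H(i)^{m/2}$.

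For the direction $\Rightarrow$, suppose $\norm(L)=\Diff\scale(L)=\frakP^{e+i}$. Because $\norm(L)$ is an even power of $\frakP$, the exponent $e+i$ must be even, giving $i\equiv e\pmod 2$. Next I would invoke Proposition~\ref{prop1} to write $L=\bigperp_j L_j$ with each $L_j$ modular of rank $1$ or $2$; dualising $L=\frakP^iL^\#$ componentwise shows every $L_j$ is again $\frakP^i$-modular. Since $\norm(L)=\sum_j\norm(L_j)$ with every summand containing $\frakP^{e+i}$ and the ideals of $\calO$ being totally ordered, each summand must equal $\frakP^{e+i}$; that is, every $L_j$ inherits the norm identity. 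A rank-$1$ modular lattice $\calO v$ has $\norm(\calO v)=\Phi(v,v)\calO=\scale(\calO v)=\frakP^i$, which can equal $\frakP^{e+i}$ only if $e=0$, contradicting ramification; hence there are no rank-$1$ components, $m$ is even, and it remains to show each rank-$2$ component is isometric to $H(i)$.

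The main work, and the step I expect to be the real obstacle, is this rank-$2$ classification. Given a rank-$2$ $\frakP^i$-modular $L_j$ with $\norm(L_j)=\frakP^{e+i}\subsetneq\frakP^i=\scale(L_j)$, I would first note that for any primitive $v$ the unimodularity of the dual pairing gives $\Phi(v,L_j)=\frakP^i$. Hence one finds a primitive $v_1$ and, after scaling by a unit, a $v_2$ with $\Phi(v_1,v_2)=\pi^i$, while the diagonal entries $\Phi(v_1,v_1),\Phi(v_2,v_2)\in\frakP^{e+i}\cap K$ lie in $\frakp^{\lceil(e+i)/2\rceil}$. Using that $\Tr\colon\frakP^i\to\frakp^{\lfloor(i+e)/2\rfloor}$ is onto, I would solve $\Phi(v_1,v_1)+\Tr(\mu\,\ol\pi^{\,i})=0$ to leading order and, by successive approximation in the complete ring $\calO$, replace $v_1$ by a primitive isotropic vector; since then $\Phi(v_1,L_j)=\frakP^i$ again, I re-choose $v_2$ with $\Phi(v_1,v_2)=\pi^i$, and a single substitution $v_2\mapsto v_2-\lambda v_1$ with $\Tr(\lambda\pi^i)=\Phi(v_2,v_2)$ clears the remaining diagonal entry exactly without disturbing $\Phi(v_1,v_1)=0$. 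The resulting Gram matrix is $\begin{pmatrix} 0 & \pi^i \\ \ol\pi^i & 0\end{pmatrix}$, i.e.\ $L_j\isom H(i)$, and combining the components yields $L\isom H(i)^{m/2}$. The delicate point throughout is controlling the exact trace image $\Tr(\frakP^i)$, so that the diagonal terms---which are genuinely smaller than the scale precisely because $e\geq 1$---can be absorbed; the even-ness of $\norm(L)$ and the formula for $\Tr(\frakP^j)$ are exactly what make this bookkeeping succeed.
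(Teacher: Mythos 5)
Your reduction to the binary case is exactly the paper's: decompose $L$ via Proposition~\ref{prop1}, observe that each component is again $\frakP^i$-modular and inherits $\norm(L_j)=\Diff\scale(L_j)$, and rule out rank-one components (the paper does this for dyadic $K$ and cites Jacobowitz for non-dyadic $K$; your parity argument via ``$\norm(L)$ is an even power of $\frakP$'' is a nice uniform way to get $i\equiv e\pmod 2$). Where you genuinely diverge is the binary classification itself: the paper simply cites Johnson's Proposition~7.1 (and Jacobowitz 8.1), whereas you attempt a self-contained proof. Your preparatory facts ($\scale(L)=\frakP^i$ for a $\frakP^i$-modular lattice, $\Tr(\frakP^j)=\frakp^{\lfloor (j+e)/2\rfloor}$) are correct, and the final step --- given a primitive isotropic $v_1$, pick $v_2$ with $\Phi(v_1,v_2)=\pi^i$ and clear $\Phi(v_2,v_2)\in\frakp^{(e+i)/2}=\Tr(\frakP^i)$ by $v_2\mapsto v_2-\lambda v_1$ --- is fine and correctly uses the parity hypothesis.

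The gap is the convergence of your successive approximation producing the isotropic $v_1$. Writing $a=\Phi(v_1,v_1)$, $b=\Phi(v_2,v_2)$, $c=\Phi(v_1,v_2)$ with $\ord_\frakP(a),\ord_\frakP(b)\ge e+i$ and $\ord_\frakP(c)=i$, the substitution $v_1\mapsto v_1+\mu v_2$ with $\Tr(\ol{\mu}c)=-a$ replaces $a$ by $\Nr(\mu)b$. In the critical case $\ord_\frakP(a)=\ord_\frakP(b)=e+i$, an arbitrary solution $\mu$ of the trace equation can be a unit, and then the new diagonal entry again has valuation $e+i$: the iteration stalls and the vectors need not converge (e.g.\ for $E=\Q_2(\sqrt 2)$, $i=1$, Gram matrix $\left(\begin{smallmatrix}4&\pi\\ \ol{\pi}&4\end{smallmatrix}\right)$, the choice $\mu=1+\pi$ solves $\Tr(\ol{\mu}\pi)=-4$ but returns a diagonal entry of the same valuation). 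What saves the argument --- and what you must actually verify --- is that $\mu$ can always be taken with $\ord_\frakP(\mu)\ge\ord_\frakP(a)-e-i+1$: since $\ord_\frakP(a)$ is even, $\Tr(\frakP^{\ord_\frakP(a)-e+1})=\frakp^{\ord_\frakP(a)/2}\ni a$, so the trace equation is solvable one level deeper than the crude bound suggests. With that choice $\ord_\frakP(\Nr(\mu)b)\ge\ord_\frakP(a)+2$, the valuations of the corrections tend to infinity, and the limit exists in the complete module $L_j$. This refinement is precisely where the hypothesis $i\equiv e\pmod 2$ enters the existence of the isotropic vector, so it cannot be waved through as ``solve to leading order''; either supply this valuation bookkeeping explicitly or, as the paper does, cite Johnson/Jacobowitz for the binary case.
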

\begin{proof}
If $K$ is non-dyadic see \cite[Proposition 8.1]{Jacobowitz}.
Suppose $K$ is dyadic and write $L = \bigperp_j L_j$ with $\frakP^i$-modular lattices of rank at most $2$.
The assumption $\norm(L) = \Diff \scale(L)$ implies that all $L_j$ have rank $2$ and satisfy $\norm(L_j) = \Diff \scale(L_j)$.
So its suffices to discuss the case $m=2$, which is \cite[Proposition~7.1]{Johnson}.
\end{proof}

Given any $\calO$-lattice $L$ in $V$, set
\begin{equation}\label{eq:rho}
 \rho(L)= L + ( \frakP^{-1} L \cap \frakP L^\# ) \:.
\end{equation}
Then $\rho$ defines a map on the set of all $\calO$-lattices in $V$.
It generalizes the maps defined by L. Gerstein in \cite{Gerstein} to hermitian spaces.
They are similar in nature to the \emph{$p$-mappings} introduced by G.~Watson in \cite{pMap}.\\
Since isometries of $L$ also preserve $L^\#$ and commute with sums and intersections, we have $\Aut(L) \subseteq \Aut(\rho(L))$.
Moreover, if $\bigperp_{i=1}^r L_i$ is a decomposition of $L$ into modular sublattices, then
$\rho(L)$ has the decomposition
\[
\bigperp_{i=1}^r L'_i \quad \mbox{where } L'_i =
\begin{cases}
  L_i &\text{if } \frakP \subseteq \scale(L_i), \\
  \frakP^{-1} L_i &\text{otherwise.}
\end{cases}
\]
In particular, $\rho(H(i)) \isom H(i-2)$ for $i \ge 2$ and $\rho(H(i)) = H(i)$ for $i < 2$.

We consider the following subgroups of $\calO^*$.
\begin{align*}
  \calE_0 &= \calE_0(\calO) = \{ u \in \calO^* \mid u \ol{u} = 1 \} \\
  \calE_1 &= \calE_1(\calO) = \{ u \ol{u}^{-1} \mid u \in \calO^* \} \subseteq \calE_0 
\end{align*}
Given any $\calO$-lattice $L$ in $V$ let
\[ \calE(L) = \{ \det(\varphi) \mid \varphi \in \Aut(L) \} \]
be its determinant group. It is a subgroup of $\calE_0$.

\begin{remark}
Suppose $E/K$ is ramified. Hilbert 90 shows that the homomorphism
$ E^* \to \calE_0 / \calE_1, x \mapsto x \ol{x}^{-1} \calE_1 $
is onto.
Its kernel is $K^* \calO^*$ and therefore $[ \calE_0 : \calE_1] = [E^* : K^* \calO^*] = 2$.
\end{remark}

For an anisotropic vector $v \in (V, \Phi)$ and a scalar $\delta \in E$ of norm $1$, we define the corresponding quasi-reflection
\[ \tau_{x,\delta} \colon V \to V,\; y \mapsto y + (\delta-1) \frac{\Phi(y,x)}{\Phi(x,x)} x \:. \]
Note that $\tau_{x,\delta}(x) = \delta x$ and $\tau_{x,\delta}$ is the identity on $\{v \in V \mid \Phi(v,x) = 0\}$. 
Hence $\tau_{x,\delta} \in \GU(V, \Phi)$ and $\det(\tau_{x,\delta}) = \delta$.

Finally, we set
\[
 e' = \max(0, e-1) = 
\begin{cases}
  e-1 & \text{ if $E/K$ is ramified,}\\
  0 & \text{ otherwise.}
\end{cases}
\]

\begin{lemma}\label{E1}
If $\delta \in \calE_0$, then $\delta - 1 \in \frakP^{e'}$.
If $E/K$ is ramified then 
\[ \calE_1 = \{ \delta \in \calE_0 \mid \delta-1 \in \Diff \} \:. \]
\end{lemma}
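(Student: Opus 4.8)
The plan is to dispatch both assertions by reducing to the ramified field case and leaning on just two inputs: Hilbert 90 for $E/K$ and the identity $\Diff = (\pi - \ol{\pi})\calO$ from Lemma~\ref{olDiff}. The first step I would record feeds both parts: for $u \in \calO^*$ the element $u\ol{u}^{-1}$ lies in $\calE_0$ and satisfies $u\ol{u}^{-1} - 1 = (u - \ol{u})\ol{u}^{-1} \in \Diff$ by Lemma~\ref{olDiff}, since $\ol{u}^{-1}$ is a unit. This gives $\calE_1 \subseteq \{\delta \in \calE_0 \mid \delta - 1 \in \Diff\}$, hence the inclusion ``$\subseteq$'' of the second statement, and in particular $\calE_1 \subseteq 1 + \Diff$.

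Next I would prove the first statement. If $E/K$ is not ramified then $e' = 0$, so $\frakP^{e'} = \calO$ and the claim holds trivially as $\delta - 1 \in \calO = \frakP^{e'}$. In the ramified case $e' = e-1$, and I would introduce $\mu = \pi\ol{\pi}^{-1} \in \calE_0$, for which Lemma~\ref{olDiff} gives $\mu - 1 = (\pi - \ol{\pi})\ol{\pi}^{-1} \in \frakP^{e-1}\setminus\frakP^{e}$. For an arbitrary $\delta \in \calE_0$ one has $\Nr(\delta)=1$, so Hilbert 90 yields $x \in E^*$ with $\delta = x\ol{x}^{-1}$; writing $x = \pi^s x_0$ with $x_0 \in \calO^*$ gives $\delta = \mu^s(x_0\ol{x_0}^{-1})$ with $x_0\ol{x_0}^{-1} \in \calE_1$. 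Since $\mu - 1 \in \frakP^{e-1}$, also $\mu^s - 1 \in \frakP^{e-1}$ for every $s \in \Z$ (expand $\mu^s - 1 = (\mu-1)(\mu^{s-1}+\dots+1)$, and use $\mu^{-1}-1 = -\mu^{-1}(\mu-1)$ for $s<0$), while $x_0\ol{x_0}^{-1} - 1 \in \Diff = \frakP^e$ by the first step. The decomposition
\[ \delta - 1 = \mu^s\bigl(x_0\ol{x_0}^{-1}-1\bigr) + (\mu^s - 1) \]
then lands in $\frakP^e + \frakP^{e-1} = \frakP^{e-1} = \frakP^{e'}$.

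For the reverse inclusion of the second statement I would set $S = \{\delta \in \calE_0 \mid \delta - 1 \in \Diff\}$ and check it is a subgroup of $\calE_0$: multiplicativity from $\delta\delta' - 1 = \delta(\delta'-1)+(\delta-1)$, and inverses from $\delta^{-1}=\ol{\delta}$ together with $\ol{\delta}-1 = \ol{\delta-1} \in \Diff$, using that $\Diff = \frakP^e$ is conjugation-stable. The first step gives $\calE_1 \subseteq S \subseteq \calE_0$, the Remark gives $[\calE_0:\calE_1]=2$, and $\mu \in \calE_0\setminus S$ because $\mu - 1 \notin \Diff$. Hence $S$ is a proper subgroup containing the index-two subgroup $\calE_1$, forcing $S = \calE_1$.

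The real content sits in the first statement, and the main obstacle is choosing the right packaging. Expanding $\delta\ol{\delta}=1$ directly in the basis $\{1,\pi\}$ forces a valuation bookkeeping that splits into the tame and wild ramification subcases and is easy to get wrong. The clean route is to notice that Hilbert 90 reduces every $\delta \in \calE_0$ to the single generator $\mu = \pi\ol{\pi}^{-1}$ modulo $\calE_1$; once $\mu - 1 \in \frakP^{e-1}\setminus\frakP^e$ is read off from Lemma~\ref{olDiff}, both the bound $\delta - 1 \in \frakP^{e'}$ and the index-two comparison follow uniformly without any case distinction.
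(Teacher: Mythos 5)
Your proof is correct and follows essentially the same route as the paper: Hilbert 90 to write $\delta \in \calE_0$ as $\beta/\ol{\beta}$, then Lemma~\ref{olDiff} to read off that $\beta/\ol{\beta}-1$ lands in $\Diff$ when $\beta$ is a unit and generates exactly $\Diff\frakP^{-1}$ when $\beta$ is a prime element. Your normalization $x=\pi^s x_0$ and the subgroup-plus-index-two argument for the reverse inclusion are just slightly more explicit packagings of the paper's two-case valuation computation.
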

\begin{proof}
Only the case that $E/K$ is ramified requires proof.
By Hilbert 90, every element of $\calE_0$ is of the form $\beta / \ol{\beta}$ for some $\beta \in E^*$ with $v(\beta) \in \{0, 1\}$.
Suppose $\gamma \in \calO^*$ and let $\pi$ be any prime element of $\calO$.
Lemma \ref{olDiff} shows that
\[
({\gamma} / {\ol{\gamma}} -1)\calO = ( \gamma - \ol{\gamma})\calO \subseteq \Diff \quad \mbox{and} \quad (\pi / \ol{\pi} -1) \calO =  ( \pi   - \ol{\pi}) \frakP^{-1} = \Diff \frakP^{-1}  \:. 
\]
The result follows.
\end{proof}

\begin{corollary}\label{cor0}
Let $L$ be an $\calO$-lattice in $V$. Then $\calE_1 \subseteq \calE(L)$.
If $\scale(L) \frakP^{e'} \subseteq \norm(L)$ then $\calE(L) = \calE_0$.
\end{corollary}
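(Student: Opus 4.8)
The plan is to realize every determinant we need by a single quasi-reflection $\tau_{x,\delta}$ attached to one well-chosen anisotropic vector $x\in L$; in this way neither Proposition~\ref{prop1} nor the modular decomposition is required, and the split, inert and ramified cases are treated uniformly. Everything rests on the following elementary observation, which I would isolate as the engine of the argument: \emph{if $x\in L$ is primitive with $\Phi(x,x)\calO=\norm(L)$ and $\delta\in\calE_0$ satisfies $(\delta-1)\scale(L)\subseteq\norm(L)$, then $\tau_{x,\delta}\in\Aut(L)$ and $\det(\tau_{x,\delta})=\delta$.}

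First I would produce such an $x$. Because $\Phi$ is nondegenerate and the characteristic of $K$ is not $2$, the space $V$ contains anisotropic vectors, so $\norm(L)=\sum_{z\in L}\Phi(z,z)\calO$ is a nonzero fractional ideal of $\calO$. As it is generated by the elements $\Phi(z,z)$, its minimal-valuation generator is attained by some $x\in L$, that is $\Phi(x,x)\calO=\norm(L)$. Such an $x$ is automatically primitive, for $x=\pi z$ with $z\in L$ would give $\Phi(x,x)=\Nr(\pi)\Phi(z,z)$ of strictly larger valuation, contradicting minimality. In particular $\Phi(x,x)\neq0$, so $x$ is anisotropic; decomposing $V=Ex\perp(Ex)^\perp$ shows that $\tau_{x,\delta}$ is multiplication by $\delta$ on $Ex$ and the identity on $(Ex)^\perp$, whence $\tau_{x,\delta}\in\GU(V,\Phi)$, $\tau_{x,\delta}^{-1}=\tau_{x,\delta^{-1}}$ and $\det(\tau_{x,\delta})=\delta$, exactly as recorded above.

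To prove the engine I would verify $\tau_{x,\delta}(L)\subseteq L$ directly: for $y\in L$ one has $\Phi(y,x)\in\scale(L)$, so $(\delta-1)\Phi(y,x)\in(\delta-1)\scale(L)\subseteq\norm(L)=\Phi(x,x)\calO$ and the correction term $(\delta-1)\Phi(y,x)\Phi(x,x)^{-1}x$ lies in $\calO x\subseteq L$. Since $\delta^{-1}=\ol\delta$ and both $\scale(L)$ and $\norm(L)$ are stable under $\ol{\phantom{x}}$ (the former because $\ol{\Phi(a,b)}=\Phi(b,a)$, the latter because it is generated by the $\Phi(z,z)\in K$), the same computation applied to $\delta^{-1}$ gives $\tau_{x,\delta^{-1}}(L)\subseteq L$, and therefore $\tau_{x,\delta}(L)=L$. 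With the engine in hand both claims drop out. For the first, any $\delta\in\calE_1$ satisfies $\delta-1\in\Diff$ (by Lemma~\ref{E1} when $E/K$ is ramified, and trivially otherwise, since then $\Diff=\calO$), so $(\delta-1)\scale(L)\subseteq\Diff\,\scale(L)\subseteq\norm(L)$ by \eqref{eq:normscale} and the engine yields $\delta\in\calE(L)$; hence $\calE_1\subseteq\calE(L)$. For the second, assuming $\scale(L)\frakP^{e'}\subseteq\norm(L)$, the first statement of Lemma~\ref{E1} gives $\delta-1\in\frakP^{e'}$ for every $\delta\in\calE_0$, so $(\delta-1)\scale(L)\subseteq\frakP^{e'}\scale(L)\subseteq\norm(L)$ and the engine yields $\delta\in\calE(L)$; thus $\calE_0\subseteq\calE(L)$, and as $\calE(L)\subseteq\calE_0$ always, $\calE(L)=\calE_0$.

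The one genuinely delicate point is the input to the engine, namely the existence of a primitive vector $x$ realizing $\norm(L)$. In the field case this is the valuation-theoretic argument above; in the split case $\calO=\calo\oplus\calo$ is not a domain, so I would double-check the attainment and primitivity claims componentwise, but no new difficulty arises there. Everything else is a direct substitution into the definition of $\tau_{x,\delta}$ together with \eqref{eq:normscale} and Lemma~\ref{E1}.
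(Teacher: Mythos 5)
Your proof is correct and takes essentially the same route as the paper: choose a norm generator $x\in L$ and realize each $\delta$ as $\det(\tau_{x,\delta})$, using Lemma~\ref{E1} together with~\eqref{eq:normscale} to verify $\tau_{x,\delta}\in\Aut(L)$. The additional care about primitivity of $x$ and about $\tau_{x,\delta}(L)=L$ rather than merely $\tau_{x,\delta}(L)\subseteq L$ is harmless but not needed.
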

\begin{proof}
Let $x \in L$ be a norm generator, i.e. $\Phi(x,x) \calO = \norm(L)$. 
Suppose first that $\scale(L) \frakP^{e'} \subseteq \norm(L)$ and let $\delta \in \calE_0$.
Lemma \ref{E1} asserts that $(\delta-1) \frac{\Phi(y,x)}{\Phi(x,x)} \in \calO$ for all $y \in L$.
Hence $\tau_{x,\delta} \in \Aut(L)$ and therefore $\delta =  \det(\tau_{x, \delta}) \in \calE(L)$.\\
Suppose now that $\scale(L) \frakP^{e'} \not\subseteq \norm(L)$. Then $E/K$ is ramified.
Let $\delta \in \calE_1$.
Lemma~\ref{E1} and Eq. \eqref{eq:normscale} show that $(\delta-1) \frac{\Phi(y,x)}{\Phi(x,x)} \in \Diff \scale(L) \norm(L)^{-1} \subseteq \calO$ for all $y \in L$.
Hence $\tau_{x,\delta} \in \Aut(L)$ and thus $\delta \in \calE(L)$. 
\end{proof}

We are now ready to give our main result.

\begin{theorem}\label{main}
Let $L$ be an $\calO$-lattice in $V$ and let $m = \rank_E(V)$.
If $E/K$ is ramified, $m$ is even and $L \isom \perp_{i=1}^{m/2} H(s_i)$ with $e \equiv s_i \pmod{2}$ for all $1 \le i \le m/2$, then $\calE(L) = \calE_1$.
In all other cases, $\calE(L) = \calE_0$.
\end{theorem}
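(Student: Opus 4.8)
The plan is to prove the dichotomy $\calE(L)\in\{\calE_0,\calE_1\}$ by disposing first of the cases already forced by Corollary~\ref{cor0} and then treating the genuinely hyperbolic case by a generation argument. Since $\calE_1\subseteq\calE(L)$ always and, when $E/K$ is ramified, $[\calE_0:\calE_1]=2$, it suffices to decide whether $\calE(L)$ equals $\calE_1$ or $\calE_0$. If $E/K$ is not ramified then $e'=0$ and \eqref{eq:normscale} gives $\scale(L)=\norm(L)$, so Corollary~\ref{cor0} yields $\calE(L)=\calE_0$; hence I may assume $E/K$ ramified throughout.

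Next I would show $\calE(L)=\calE_0$ in every non-special case. Fix a decomposition $L=\bigperp_j L_j$ into modular sublattices of rank $1$ or $2$ (Proposition~\ref{prop1}); since it is orthogonal, every $\psi\in\Aut(L_j)$ extends by the identity on $\bigperp_{i\neq j}L_i$ to an element of $\Aut(L)$, so $\calE(L_j)\subseteq\calE(L)$. A rank-$1$ component satisfies $\norm(L_j)=\scale(L_j)$, and for a rank-$2$ component with $\norm(L_j)\neq\Diff\scale(L_j)$ a comparison of powers of $\frakP$ via \eqref{eq:normscale} gives $\scale(L_j)\frakP^{e'}\subseteq\norm(L_j)$; in both cases Corollary~\ref{cor0} forces $\calE(L_j)=\calE_0$, whence $\calE(L)=\calE_0$. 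By Proposition~\ref{prop2} (with $m=2$) the only surviving possibility is that every $L_j$ has rank $2$ with $\norm(L_j)=\Diff\scale(L_j)$, i.e.\ $L_j\isom H(s_j)$ and $s_j\equiv e\pmod2$, which is exactly the special case. Thus all non-special lattices are settled.

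It remains to prove $\calE(L)\subseteq\calE_1$ when $L\isom\bigperp_{i=1}^{m/2}H(s_i)$ with $s_i\equiv e\pmod2$. Rescaling $\Phi$ by a power of a prime of $\calo$ changes neither $\Aut(L)$ nor $\calE(L)$ and shifts every $s_i$ by an even integer, so I may assume all $s_i\ge 2$; iterating $\rho$ then yields $\calE(L)\subseteq\calE(\rho(L))\subseteq\cdots\subseteq\calE(M)$ with $M=H(c)^{m/2}$, where $c\in\{0,1\}$ and $c\equiv e\pmod2$. Hence it suffices to show $\calE(M)\subseteq\calE_1$. Using a hyperbolic basis $x_1,\dots,x_{m/2},y_1,\dots,y_{m/2}$ with $\Phi(x_i,y_j)=\delta_{ij}\pi^c$ and totally isotropic $X=\langle x_i\rangle$, $Y=\langle y_i\rangle$, I would check that the three natural families of automorphisms lie in $\ker(\det\bmod\calE_1)$: the Levi elements $\mathrm{diag}(A,\ol{A}^{-t})$ with $A\in\GL_{m/2}(\calO)$, of determinant $\det(A)/\ol{\det(A)}\in\calE_1$; the unipotent elements fixing $X$ pointwise, of determinant $1$; and the Weyl element exchanging one pair $x_i,y_i$, of determinant $-(\pi/\ol\pi)^c$. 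A short computation with Lemma~\ref{E1}, distinguishing whether $-1\in\calE_1$ (it is in the wildly ramified case but not in the case $\ol\pi=-\pi$), shows $-(\pi/\ol\pi)^c\in\calE_1$ precisely because $c\equiv e$. Granting that these elements generate $\Aut(M)$, the homomorphism $\Aut(M)\xrightarrow{\det}\calE_0\to\calE_0/\calE_1$ is trivial, giving $\calE(M)\subseteq\calE_1$ and therefore $\calE(L)=\calE_1$.

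The main obstacle is the generation statement: that the integral unitary group $\Aut(M)$ of the hyperbolic lattice $M=H(c)^{m/2}$ is generated by its Levi, unipotent and Weyl elements. I would establish this by an integral Witt-type argument, showing that through products of such generators $\Aut(M)$ acts transitively on the primitive isotropic vectors equivalent to $x_1$, so that after multiplying a given $\varphi$ by suitable generators one may assume $\varphi(x_1)=x_1$ and then $\varphi(y_1)=y_1$, and finally induct on $m$ by restricting to the orthogonal complement $H(c)^{m/2-1}$. Making this transitivity precise over the complete local ring $\calO$, uniformly in the tame and the wildly ramified dyadic cases, is the technical heart of the argument; everything else reduces to the bookkeeping with powers of $\frakP$ sketched above.
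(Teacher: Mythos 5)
Your treatment of the non-special cases (decomposition into modular components of rank at most $2$, Corollary~\ref{cor0}, Proposition~\ref{prop2}) and your reduction of the special case to $M \isom H(c)^{m/2}$ with $c\in\{0,1\}$, $c\equiv e \pmod 2$, via rescaling and iterating $\rho$, coincide with the paper's argument, and your determinant computations for the Levi, unipotent and Weyl elements are correct, including the case distinction for whether $-1\in\calE_1$. The problem is exactly the step you flag yourself: the claim that $\Aut(M)$ is generated by these three families. This is not a routine Witt-type argument that can be ``granted''; it is an integral elementary-generation statement for the unitary group of a hyperbolic lattice over a complete local ring, and it must be proved uniformly in the tame and wildly ramified dyadic cases. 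Even for $m=2$ one has to analyse which unimodular pairs $(a,b)$ with $\Tr(a\ol{b}\pi^{c})=0$ arise as $\varphi(x_1)$ and show they can be returned to $x_1$ by integral elementary moves; for larger $m$ the transitivity on primitive isotropic vectors of the lattice (not just the space) is a further substantive assertion. As written, the key inclusion $\calE(M)\subseteq\calE_1$ is therefore not established.

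The paper sidesteps generation entirely with a short structural argument you should compare against: choose $\alpha\in E$ with $\ol{\alpha}=-\alpha$ and $\ord_\frakP(\alpha)=c$, so that $H(c)\isom\left(\begin{smallmatrix}0&\alpha\\-\alpha&0\end{smallmatrix}\right)$; by Lemma~\ref{olDiff} the pairing $(x,y)\mapsto \alpha^{-1}\Phi(x,y)+\Diff$ is a well-defined symplectic form on $M/\Diff M$ over $\calO/\Diff$, every $\varphi\in\Aut(M)$ induces a symplectic automorphism and hence satisfies $\det(\varphi)\equiv 1\pmod{\Diff}$, and Lemma~\ref{E1} identifies $\{\delta\in\calE_0\mid\delta-1\in\Diff\}$ with $\calE_1$. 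This applies to all of $\Aut(M)$ at once, with no need to exhibit generators. To complete your version you would either have to supply a full proof of the generation statement or replace that step by this mod-$\Diff$ reduction.
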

\begin{proof}
We fix a decomposition
$L = \bigperp_{i=1}^r L_i$ with modular sublattices $L_i$ of rank $1$ or $2$ as in Proposition \ref{prop1}.
Suppose first that $\calE(L) \ne \calE_0$.
Then $\calE(L_i) \ne \calE_0$ for all $i$.
Corollary \ref{cor0} shows that this is only possible if $E/K$ is ramified and $\Diff \scale(L_i) = \norm(L_i)$.
By Proposition~\ref{prop2} this implies that $L_i \isom H(s_i)$ for some integer $s_i$ with $s_i \equiv e \pmod{2}$.\\
  Conversely assume that $E/K$ is ramified, $m$ is even and $L \isom \perp_{i=1}^{m/2} H(s_i)$ with $e \equiv s_i \pmod{2}$ for all~$i$.
  After rescaling $L$ we may assume that $s_i \ge 0$ for all~$i$.
  Let $j \in \{0,1\}$ such that $j \equiv e \pmod{2}$.
  Repeated application of the map $\rho$ from Eq. \eqref{eq:rho} yields some $\calO$-lattice $M$ in $V$ such that $\Aut(L) \subseteq \Aut(M)$ and $M \isom H(j)^{m/2}$.
  By Corollary \ref{cor0} it suffices to show that $\calE(M) \subseteq \calE_1$.
  There exists some element $\alpha \in E$ such that $\ol{\alpha} = -\alpha$ and $\ord_\frakP(\alpha) = j$.
  Then $H(j) \isom \left( \begin{smallmatrix} 0 & \alpha \\ -\alpha & 0 \end{smallmatrix} \right)$ by Proposition~\ref{prop2}.
  Lemma \ref{olDiff} shows that
  \[ M/\Diff M \times M/\Diff M \to \calO/\Diff,\, (x + \Diff M,y + \Diff M) \mapsto \alpha^{-1} \Phi(x,y) + \Diff \]
  is a well defined symplectic form over $\calO/\Diff \calO$. 
  Since automorphisms of such forms have determinant one, we conclude that $\det(\varphi) \equiv 1 \pmod{\Diff}$ for all $\varphi \in \Aut(M)$.
  Hence  $\calE(M) \subseteq \calE_1$ by Lemma \ref{E1}.
\end{proof}

In \cite[Proposition 4.18]{Shimura} G.~Shimura works out the group $\calE(L)$ for maximal lattices~$L$.
We recover his result from Theorem \ref{main}.

\begin{corollary}[Shimura]
Let $L$ be a maximal $\calO$-lattice in a hermitian space $(V, \Phi)$ over $E$ of rank $m$.
Then $\calE(L) = \calE_1$ if and only if $E/K$ is ramified, $m$ is even and $\det(V,\Phi) = (-1)^{m/2}$.
In all other cases $\calE(L) = \calE_0$.
\end{corollary}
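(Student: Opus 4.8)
The plan is to read the corollary off Theorem~\ref{main}. Both statements assert $\calE(L) = \calE_0$ unless $E/K$ is ramified and $m$ is even, so I may assume from the outset that $E/K$ is ramified and that $m$ is even. Under these hypotheses Theorem~\ref{main} reduces everything to the single equivalence
\[
  L \isom \perp_{i=1}^{m/2} H(s_i)\ \text{with } e \equiv s_i \pmod 2 \text{ for all } i
  \quad\Longleftrightarrow\quad
  \det(V,\Phi) = (-1)^{m/2},
\]
which is what I would establish for a maximal lattice $L$.

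The forward implication is a determinant computation that does not even use maximality. The Gram determinant of $H(s_i)$ is $-\pi^{s_i}\ol{\pi}^{\,s_i} = -\Nr(\pi)^{s_i}$, and since $\Nr(\pi) \in \Nr(E^*)$ this represents $-1$ in $K^*/\Nr(E^*)$. Taking the product over the $m/2$ orthogonal summands gives $\det(V,\Phi) = (-1)^{m/2}$, independently of the residues of the $s_i$ modulo $2$. In particular, if $\det(V,\Phi) \ne (-1)^{m/2}$ then the contrapositive shows that $L$ cannot be isometric to any $\perp_i H(s_i)$, so Theorem~\ref{main} already yields $\calE(L) = \calE_0$ in that case.

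For the reverse implication I would invoke the structure theory of maximal lattices. Over the local field, hermitian spaces are classified by their rank together with $\det(V,\Phi) \in K^*/\Nr(E^*) \cong \Z/2\Z$, and the class $\det = (-1)^{m/2}$ is exactly the orthogonal sum of $m/2$ hyperbolic planes. Since the maximal lattices of a fixed norm in a given space form a single isometry class (Jacobowitz), in this split space a maximal lattice is isometric to $\perp_{i=1}^{m/2} H(s_i)$. It then remains to see that maximality forces $e \equiv s_i \pmod 2$: using that the ramification index is $2$ and Lemma~\ref{olDiff}, one computes $\norm(H(i)) = \Tr(\frakP^i) = \frakp^{\lfloor (i+e)/2\rfloor}$, so enlarging $H(i)$ to $H(i-1)$ leaves the norm unchanged precisely when $i \not\equiv e \pmod 2$; hence $H(i)$ is maximal if and only if $i \equiv e$. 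Equivalently, a maximal lattice here satisfies $\norm(L) = \Diff\,\scale(L)$, and Proposition~\ref{prop2} then delivers the form $\perp H(s_i)$ with the required parity. I expect this last part to be the main obstacle: everything outside the split space is handled by the elementary determinant bookkeeping above, so the real work is the classification input that in the space of determinant $(-1)^{m/2}$ a maximal lattice splits as $\perp H(s_i)$, together with the verification that maximality selects the parity $s_i \equiv e$.
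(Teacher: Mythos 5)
Your proof follows the paper's argument essentially verbatim: reduce to the ramified, even-rank case via Theorem~\ref{main}, compute $\det\bigl(\perp_i H(s_i)\bigr) = (-1)^{m/2}$ to dispose of the case $\det(V,\Phi)\ne(-1)^{m/2}$, and invoke the classification of maximal lattices to obtain $L\isom\perp_{i} H(s_i)$ with $s_i\equiv e\pmod 2$ when $\det(V,\Phi)=(-1)^{m/2}$. For that last step the paper simply cites Shimura's Propositions~4.7 and 4.8 (which give $L\isom\perp H(2n-e)$ where $\norm(L)=\frakP^{2n}$); this is exactly the ``classification input'' you flag as the main obstacle, and your sketch of it --- uniqueness of maximal lattices of a given norm together with the computation $\norm(H(i))=\frakp^{\lfloor(i+e)/2\rfloor}\calO$ --- is a reasonable outline of what that citation supplies.
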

\begin{proof}
  Theorem \ref{main} shows that $\calE(L) = \calE_0$ whenever $E/K$ is unramified or $m$ is odd.
  Suppose now that $E/K$ is ramified and $m$ is even. Let $\norm(L) = \frakP^{2n}$.
  If $\det(V,\Phi) = (-1)^{m/2}$, then $L \isom \bigperp_{i=1}^{m/2}H(2n-e)$, see for example \cite[Propositions~4.7 and 4.8]{Shimura}.
  So in this case, $\calE(L) = \calE_1$.
  Conversely, if $\det(V, \Phi) \ne (-1)^{m/2}$, then $L$ can not be written in the form $L \isom \bigperp_{i=1}^{m/2}H(s_i)$ with integers $s_i$.
  So in this case $\calE(L) = \calE_0$.
\end{proof}

\section{Special genera of hermitian lattices over number fields}

In this section we assume that $K$ and $E$ are both algebraic number fields with ring of integers $\calo$ and $\calO$ respectively.

Let $\frakp$ be a maximal ideal of $\calo$.
The completions of $K$ and $\calo$ at $\frakp$ will be denoted by $K_\frakp$ and $\calo_\frakp$.
More generally, given a vector space $V$ over $K$ and an $\calo$-module $L$, then $V_\frakp = V \tensor_K K_\frakp$ and $L_\frakp = L \tensor_\calo \calo_\frakp$ denote the completions of $V$ and $L$ at $\frakp$.

Let $(V,\Phi)$ be a hermitian space over $E$ of rank at least $2$ and let $L$ be an \mbox{$\calO$-lattice} in $V$.
The space $(V,\Phi)$ is called definite, if $K$ is totally real and there exists some $a \in K^*$ such that $a\Phi(x,x)$ is totally positive for every nonzero vector $x \in V$.
Otherwise $(V,\Phi)$ is called indefinite.\\
By linearity, $\Phi$ extends to $V_\frakp$.
This yields a hermitian space $(V_\frakp, \Phi)$ over the etale $K_\frakp$-algebra $E_\frakp$ which contains the $\calO_\frakp$-lattice $L_\frakp$.

\begin{definition}\label{Def:Genus}
Two lattices $L$ and $L'$ in $V$ are said to be in the same genus if $L_\frakp \isom L'_\frakp$ for every maximal ideal $\frakp$ of $K$.
The lattices are said to be in the same special genus, if there exists an isometry $\sigma \in \GU(V,\Phi)$ such that $L_\frakp = \sigma(\varphi_\frakp(L'_\frakp))$ with $\varphi_\frakp \in \SU(V,\Phi)$ for every maximal ideal $\frakp$ of $\calo$.
The genus and special genus of $L$ will be denoted by $\gen(L)$ and $\sgen(L)$ respectively.
\end{definition}

It is well known that the genus of $L$ is a union of finitely many special genera and each special genus decomposes into finitely many isometry classes.
The special genera in $\gen(L)$ were described by G.~Shimura \cite{Shimura} in terms of the local determinant groups $\calE(L_\frakp)$.
To state his result, some more notation is needed:
\begin{itemize}
\item $\calI$ denotes the group of fractional ideals of $\calO$.
\item $J = \{\frakA \in \calI \mid \frakA \ol{\frakA} = \calO \}$ and $J_0 = \{ \alpha\calO \mid \alpha \in E^* \mbox{ and } \Nr(\alpha) = 1 \} \subseteq J_0$.
\item $C = \calI / \{ \alpha \calO \mid \alpha \in E^* \}$ denotes the class group of $\calO$.
\item $C_0 = \{ [\frakA] \in C \mid \ol{\frakA} = \frakA \} $ is the subgroup of $C$ generated by the image of the class group of $\calo$ in $C$ and
\[ \{ [\frakP] \in C \mid \frakP \in \calI \mbox{ a prime ideal that ramifies over } K \} \:. \]
\item $P(L)$ is the set of all prime ideals of $\calo$ such that $\calE(L_\frakp) \ne \calE_0(\calO_\frakp)$. The ideals in $P(L)$ are necessarily ramified in $E$, cf. Theorem \ref{main}.
\item $\calE(L) =  \prod_{\frakp \in P(L)} \calE_0(\calO_\frakp) / \calE(L_\frakp)$.
\item $H(L) = \{ (e\calO, (e \calE(L_\frakp))_{\frakp \in P(L)}) \in J \times \calE(L) \mid e \in E \mbox{ and } \Nr(e) = 1 \}$.
\item $R(L) = \{ (e \calE(L_\frakp))_{\frakp \in P(L)} \in \calE(L) \mid e \in \calO \mbox{ and } \Nr(e) = 1 \} $.
\item Given a maximal ideal $\frakP$ of $\calO$, define an element $c(\frakP) \in \calE(L)$ by
\[ c(\frakP)_\frakp = \begin{cases}-1 & \text{if } \frakp = \frakP \cap \calO, \\ +1 & \text{otherwise} \end{cases}\] 
for all $\frakp \in P(L)$.
\item Given an $\calO$-lattice $M$ in $V$, let $[L:M]_\calO$ be the fractional ideal of $\calO$ generated by $\{\det(\varphi) \mid \varphi \in \Hom_\calO(L, M) \}$. 
\end{itemize}

\begin{remark}\label{rem:iso}
The group homomorphism $\calI \to J /J_0,\; \frakA \mapsto \frakA \ol{\frakA}^{-1}$ is onto by Hilbert 90.
Hence it induces an isomorphism $C/C_0 \isom J/J_0 $.
\end{remark}

\begin{theorem}[Shimura]\label{Th:Shimura}
Consider the map
\[\Psi \colon \gen(L) \to J \times \calE(L),\, M \mapsto ([L:M]_\calO, ( \det(\sigma_\frakp) \calE(L_\frakp))_{\frakp \in P(L)}) \]
where $\sigma_\frakp \in \GU(V_\frakp, \Phi)$ such that $\sigma_\frakp(L_\frakp) = M_\frakp$ for all $\frakp \in P(L)$.
Then $\Psi$ induces a bijection between the special genera in $\gen(L)$ and $(J \times \calE(L))/H(L)$.
\end{theorem}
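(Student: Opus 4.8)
The plan is to reformulate everything adelically, turn $\Psi$ into the determinant map on a one-dimensional torus, and then read off the quotient by genus-theoretic considerations. Write $G = \GU(V,\Phi)$ and $G' = \SU(V,\Phi)$, and let $T$ be the torus of norm-one elements $\{x \in E : x\ol{x} = 1\}$, so that the determinant yields an exact sequence $1 \to G' \to G \to T \to 1$; surjectivity of $\det$ on $K_\frakp$- and $K$-points is witnessed by the quasi-reflections $\tau_{x,\delta}$, which realise any norm-one $\delta$ as a determinant. First I would invoke the standard dictionary: two lattices in a genus are locally isometric at every $\frakp$, so $G(\mathbb{A}_f)$ acts transitively on $\gen(L)$ with stabiliser $\prod_\frakp\Aut(L_\frakp)$, and choosing $\sigma_\frakp$ with $\sigma_\frakp(L_\frakp) = M_\frakp$ identifies $\gen(L)$ with $G(\mathbb{A}_f)/\prod_\frakp\Aut(L_\frakp)$. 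Unwinding Definition \ref{Def:Genus}, $M$ and $M'$ lie in the same special genus precisely when their connecting adeles differ by an element of $G(K)\,G'(\mathbb{A}_f)$, so the special genera are the double cosets $G(K)\,G'(\mathbb{A}_f)\backslash G(\mathbb{A}_f)/\prod_\frakp\Aut(L_\frakp)$.

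Next I would apply the determinant. Since $G' = \ker(\det)$ and $\det$ is locally surjective everywhere, it induces an isomorphism $G'(\mathbb{A}_f)\backslash G(\mathbb{A}_f) \cong T(\mathbb{A}_f)$ carrying $G(K)$ onto $T(K) = \{e \in E^* : \Nr(e) = 1\}$ and $\prod_\frakp\Aut(L_\frakp)$ onto $\prod_\frakp \calE(L_\frakp)$. As $T$ is abelian, the set of special genera becomes the single quotient $T(\mathbb{A}_f)\big/\bigl(T(K)\cdot\prod_\frakp\calE(L_\frakp)\bigr)$, and $\Psi$ becomes the class map $\det(\sigma) \mapsto [\det(\sigma)]$. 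Here I would check that the ideal component is correct, i.e. that $[L:M]_\calO$ localises at $\frakp$ to $\det(\sigma_\frakp)\calO_\frakp$, because $\Hom_{\calO_\frakp}(L_\frakp,M_\frakp) = \sigma_\frakp\End_{\calO_\frakp}(L_\frakp)$ has determinant ideal $\det(\sigma_\frakp)\calO_\frakp$.

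It then remains to identify $T(\mathbb{A}_f)/\prod_\frakp\calE(L_\frakp)$ with $J \times \calE(L)$. I would use the map $T(\mathbb{A}_f) \to J \times \calE(L)$ sending $x$ to the pair $\bigl(\frakA_x, (x_\frakp\calE(L_\frakp))_{\frakp\in P(L)}\bigr)$, where $\frakA_x$ is the fractional $\calO$-ideal determined by the valuations of the components of $x$; since $x\ol{x}=1$ one has $\frakA_x\ol{\frakA_x}=\calO$, so $\frakA_x \in J$, and at each ramified prime $\frakp\in P(L)$ the component $x_\frakp$ is a unit and defines a class in $\calE_0(\calO_\frakp)/\calE(L_\frakp)$. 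The crucial point is that this map is surjective with kernel exactly $\prod_\frakp\calE(L_\frakp)$: an idele has trivial ideal and trivial unit-classes iff it is a norm-one unit lying in $\calE(L_\frakp)$ at every $\frakp$, and outside $P(L)$ one has $\calE(L_\frakp) = \calE_0(\calO_\frakp)$ by the very definition of $P(L)$, which is exactly where Theorem \ref{main} is used. Under this identification $T(K)$ maps onto $H(L)$, so quotienting further by $T(K)$ yields the asserted bijection between special genera and $(J\times\calE(L))/H(L)$.

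The step I expect to be the main obstacle is the careful handling of the archimedean places together with the left/right conventions. For a definite space $G(K_\infty)$ is compact and its determinant surjects onto the compact archimedean part of $T$, so I must verify that these factors are absorbed and do not perturb the finite-idelic computation above; dually, I must confirm that the double coset is finite (it is a ray-class-type group) so that the statement is not vacuous. None of the individual inputs is deep once the everywhere-local surjectivity and the kernel of the determinant are pinned down, the ramified dyadic primes being precisely the places where Theorem \ref{main} becomes indispensable.
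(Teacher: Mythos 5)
The paper does not prove this statement at all: its ``proof'' is the single line ``See [Shimura, Theorem 5.24 and its proof 5.28]'', so your proposal is being compared against a citation rather than an argument. What you have written is, in outline, a correct reconstruction of the standard adelic proof (which is essentially what Shimura's cited argument amounts to): the genus is the orbit $G(\mathbb{A}_f)/\prod_\frakp\Aut(L_\frakp)$, Definition~\ref{Def:Genus} makes the special genera exactly the double cosets $G(K)\,G'(\mathbb{A}_f)\backslash G(\mathbb{A}_f)/\prod_\frakp\Aut(L_\frakp)$, and since $G'=\ker(\det)$ is normal and $\det$ is surjective on $K$- and $K_\frakp$-points via the quasi-reflections $\tau_{x,\delta}$, everything collapses to the abelian quotient $T(\mathbb{A}_f)/\bigl(T(K)\cdot\prod_\frakp\calE(L_\frakp)\bigr)$. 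Your identification of this quotient with $(J\times\calE(L))/H(L)$ is also right: the ideal component records the valuations (nontrivial only at split primes, since $\frakA\ol{\frakA}=\calO$ forces triviality at inert and ramified primes), the unit components matter only on the finite set $P(L)$ because $\calE(L_\frakq)=\calE_0(\calO_\frakq)$ elsewhere, and the diagonal image of $T(K)$ is precisely $H(L)$. Two remarks: your worry about archimedean places is moot, since the genus, the special genus and $G(\mathbb{A}_f)$ are all defined purely at the finite places, so no compact archimedean factor ever enters; and the finiteness of the double coset space is not something you need to verify independently here --- it follows a posteriori from Lemma~\ref{GL}, since $[C:C_0]$ and $[\calE(L):R(L)]$ are finite. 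If you were writing this up in full you would also want to record explicitly that $\Psi$ is well defined (the class $\det(\sigma_\frakp)\calE(L_\frakp)$ is independent of the choice of $\sigma_\frakp$ because two choices differ by an element of $\Aut(L_\frakp)$), but that is implicit in your framework.
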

\begin{proof}
See \cite[Theorem 5.24 and its proof 5.28]{Shimura}.
\end{proof}

The decomposition given in Theorem \ref{Th:Shimura} can be made explicit using Kneser's concept of neighbours, see also \cite{Schiemann}.

\begin{definition}
Let $\frakP$ be a maximal ideal of $\calO$ and let $\frakp = \frakP \cap \calo$.
Further $L, L'$ be $\calO$-lattices in $V$.
We say that $L'$ is a $\frakP$-neighbour of $L$ if $L_\frakp$ and $L'_\frakp$ are both modular with $\scale(L_\frakp) = \scale(L'_\frakp)$ and 
there exist $\calO$-module isomorphisms
\[ L/(L \cap L') \isom \calO/\frakP \quad \mbox{and} \quad L'/(L \cap L') \isom \calO/\ol{\frakP}\:. \]
\end{definition}

\begin{lemma}\label{lem:cP}
Let $\frakP$ be a maximal ideal of $\calO$ and set $\frakp = \frakP \cap \calo$.
Suppose $L_\frakp$ is modular and if $\frakp$ is ramified in $E$, then $2 \notin \frakp$.
Further, let $\Psi$ denote the bijection from Theorem \ref{Th:Shimura}.
\begin{enumerate}
\item If $(V_\frakp, \Phi)$ is isotropic (which automatically holds if $\rank_E(V) \ge 3$ or $\frakp$ is unramified in $E$), then there exists some $\frakP$-neighbour of $L$.
\item If $L'$ is a $\frakP$-neighbour of $L$ then $L' \in \gen(L)$ and $\Psi(L') = (\frakP \ol{\frakP}^{-1}, c(\frakP)) $.
\end{enumerate}
\end{lemma}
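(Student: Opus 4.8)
The plan is to handle both parts through a single explicit local construction at $\frakp = \frakP \cap \calo$, leaving $L$ unchanged at every other place. Write $\mathfrak{s} = \scale(L_\frakp)$; since $L_\frakp$ is modular we have $\mathfrak{s} = \ol{\mathfrak{s}}$, the reduction of $\Phi$ on $L_\frakp/\frakP L_\frakp$ is nondegenerate, and hence $\Phi(v, L_\frakp) = \mathfrak{s}$ for every primitive $v \in L_\frakp$. For part~(1) I would first produce a primitive isotropic vector: under the stated hypotheses $V_\frakp$ carries an isotropic vector (the classification of hermitian spaces over local fields, cf.\ \cite{Jacobowitz}, gives this automatically when $\rank_E(V) \ge 3$ or $\frakp$ is unramified), and scaling it into $L_\frakp$ by a generator of the ideal $\{\lambda \in E_\frakp \mid \lambda u \in L_\frakp\}$ yields a primitive isotropic $v$. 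I then set
\[ M = \{ x \in L_\frakp \mid \Phi(x,v) \in \frakP \mathfrak{s} \}, \qquad L'_\frakp = M + \ol{\frakP}^{-1} v, \qquad L'_\frakq = L_\frakq \ (\frakq \ne \frakp). \]
Because $x \mapsto \Phi(x,v)$ is a surjective $\calO$-linear map $L_\frakp \to \mathfrak{s}/\frakP\mathfrak{s} \isom \calO/\frakP$ and $\calO v \subseteq M$ (as $\Phi(v,v)=0$), short computations give $L_\frakp/M \isom \calO/\frakP$, $\;L'_\frakp \cap L_\frakp = M$, and $L'_\frakp/M \isom \calO/\ol{\frakP}$. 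Using $\Phi(v,v)=0$ together with $\Phi(v,M) \subseteq \ol{\frakP}\mathfrak{s}$ one checks $\scale(L'_\frakp) = \mathfrak{s}$ and that $L'_\frakp$ is again modular, so $L'$ is a $\frakP$-neighbour of $L$.

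For part~(2) the key observation is that \emph{every} $\frakP$-neighbour arises this way. Given $L'$, put $N = L \cap L'$ and choose $v' \in L'$ generating $L'/N \isom \calO/\ol{\frakP}$; then $v' \notin L$, the ideal $\{\lambda \mid \lambda v' \in L_\frakp\}$ equals $\ol{\frakP}$, so $v := \gamma v'$ (with $\gamma$ a generator of $\ol{\frakP}$) is primitive in $L_\frakp$. For $x \in N \subseteq L'_\frakp$ one has $\Phi(x,v) = \ol{\gamma}\,\Phi(x,v') \in \frakP\mathfrak{s}$, whence $N \subseteq M$; as both have index $\frakP$ in $L_\frakp$ this forces $N = M$ and $L'_\frakp = M + \ol{\frakP}^{-1}v$, exactly the construction above. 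Since $L'_\frakp$ is modular of the same scale and rank as $L_\frakp$ inside the same space, the uniqueness of modular lattices in the unramified and the non-dyadic ramified cases (the hypothesis $2 \notin \frakp$ when $\frakp$ ramifies excludes the problematic dyadic case, cf.\ Proposition~\ref{prop2} and \cite{Jacobowitz}) gives $L'_\frakp \isom L_\frakp$; as $L'$ agrees with $L$ elsewhere, $L' \in \gen(L)$. The first coordinate of $\Psi(L')$ is then the determinant index $[L:L']_\calO = [L:N]_\calO\,[N:L']_\calO = \frakP \cdot \ol{\frakP}^{-1}$, as claimed.

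It remains to compute the second coordinate, the class of $\det(\sigma_\frakp)$ in $\calE_0/\calE(L_\frakp)$ for some isometry $\sigma_\frakp$ with $\sigma_\frakp(L_\frakp) = L'_\frakp$; this is well defined since two such isometries differ by an element of $\Aut(L_\frakp)$, whose determinant lies in $\calE(L_\frakp)$. At a place $\frakq \in P(L)$ with $\frakq \ne \frakp$ we take $\sigma_\frakq = \mathrm{id}$, giving the trivial class $c(\frakP)_\frakq = +1$. The substantive case is $\frakp \in P(L)$: by Theorem~\ref{main} this forces $\frakp$ ramified with $L_\frakp \isom H(s)^{m/2}$, $s \equiv e \pmod 2$; being non-dyadic, $e=1$, so $s$ is odd and $\frakP = \ol{\frakP}$. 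Since $\norm(L_\frakp)=\Diff\,\mathfrak{s}$, any rank-$2$ modular sublattice $H = \calO v + \calO w$ with $\Phi(v,w)=\pi^s$ has $\norm(H)=\Diff\,\mathfrak{s}$ and hence $H \isom H(s)$ by Proposition~\ref{prop2}; moreover $L_\frakp = H \perp L_0$ and the construction alters only $H$, so $\sigma_\frakp = \sigma_H \perp \mathrm{id}$ and $\det(\sigma_\frakp)=\det(\sigma_H)$. Writing $\sigma_H$ in the basis $(v,w)$ with target basis $(\pi^{-1}v,\pi w)$ of $H'$, say $\sigma_H(v)=\alpha\pi^{-1}v+\beta\pi w$ and $\sigma_H(w)=\gamma\pi^{-1}v+\delta\pi w$ with $\det(\sigma_H)=\alpha\delta-\beta\gamma$, the isometry relation $\Phi(\sigma_H v,\sigma_H w)=\pi^s$ reduces, after dividing by $\pi^s$, to $\beta\ol{\gamma}-\alpha\ol{\delta}\equiv 1 \pmod{\frakP}$; since conjugation is trivial on the residue field this reads $\det(\sigma_H)\equiv -1 \pmod{\frakP}$. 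By Lemma~\ref{E1} this means $\det(\sigma_H)\notin\calE_1=\calE(L_\frakp)$, i.e.\ the class is the nontrivial element $c(\frakP)_\frakp=-1$.

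I expect the last step to be the main obstacle, because for an \emph{arbitrary} neighbour the recovered vector $v$ need not be exactly isotropic, so one must be sure the cross terms do not disturb the computation of $\det(\sigma_H) \bmod \frakP$. The resolution is the norm bound $\Phi(v,v)\in \frakP\ol{\frakP}\,\norm(L'_\frakp)\subseteq\frakP^{s+3}$ (from $v=\gamma v'$ with $v'\in L'_\frakp$ and $\norm(L'_\frakp)=\Diff\,\mathfrak{s}$): the contributions of $\Phi(v,v)$ and of $\Phi(w,w)\in\frakP^{s+1}$ both land in $\pi^s\frakP$ and vanish in the congruence, while the parity $s\equiv e\pmod 2$ is precisely what yields the sign $-1$ rather than $+1$.
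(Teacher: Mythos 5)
Your argument is correct and, at its core, takes the same route as the paper: identify the neighbour locally as a hyperbolic plane $H$ rescaled to $\frakP^{-1}v \oplus \frakP w$, split it off orthogonally, and compute the determinant of the resulting isometry. The difference is in how much is done by hand. The paper delegates part (1), the containment $L' \in \gen(L)$, and the existence of a decomposition $L_\frakp = (x\calO \oplus y\calO) \perp M$ with $\Phi(x,x) = \Phi(y,y) = 0$ and $L'_\frakp = (x\frakP^{-1} \oplus y\frakP) \perp M$ to \cite{Schiemann} and \cite{Kirschmer}; with exactly isotropic $x,y$ in hand it then exhibits the single isometry $x \mapsto -\pi^{-1}x$, $y \mapsto \pi y$, whose determinant is literally $-1$. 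You instead rebuild the neighbour structure from scratch (showing $L \cap L' = \{x \in L_\frakp \mid \Phi(x,v) \in \frakP\scale(L_\frakp)\}$ for a primitive $v$), tolerate a possibly non-isotropic $v$, and prove that \emph{every} isometry $H \to H'$ has determinant $\equiv -1 \pmod{\frakP}$, which by Lemma \ref{E1} and $[\calE_0 : \calE_1] = 2$ identifies the coset as that of $-1$. Both work: your version is self-contained and makes visible exactly where the hypotheses enter (the bound $\Phi(v,v) \in \frakP\ol{\frakP}\norm(L'_\frakp)$ controls the cross terms, and $2 \notin \frakp$ gives $-1 \not\equiv 1 \pmod{\Diff}$), whereas the paper's citations buy the cleaner exact value $\det(\sigma_\frakp) = -1$ and a shorter proof. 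Two points you should still write out carefully rather than assert: that the rank-two modular sublattice $H = \calO v + \calO w$ indeed splits $L_\frakp$ orthogonally and that the passage to $L'_\frakp$ changes only the $H$-component (this is what legitimizes $\sigma_\frakp = \sigma_H \perp \mathrm{id}$), and the uniqueness up to isometry of modular lattices of given scale and rank on a fixed space, which you invoke to get $L'_\frakp \isom L_\frakp$, deserves a precise reference to \cite{Jacobowitz}.
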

\begin{proof}
Part (1) follows from \cite[Lemma 2.2]{Schiemann} and \cite[Proposition 5.2.4]{Kirschmer}.
Suppose now $L'$ is a $\frakP$-neighbour of $L$.
The definition of $\frakP$-neighbours yields $[L:L']_\calO = \frakP \ol{\frakP}^{-1}$.
Further, $L' \in \gen(L)$ by \cite[Lemma 2.8]{Schiemann} and \cite[Remark 5.2.2]{Kirschmer}.
If $\frakq \in P(L)$ is different from $\frakp$, then $L_\frakq = L'_\frakq$.
Suppose now $\frakp \in P(L)$. Then $E_\frakp / K_\frakp$ is necessarily ramified and there exists some prime element $\pi \in \calO_\frakp$ such that $\ol{\pi} = -\pi$.
Loc. cit. show that there exists a decomposition $L_\frakp = (x \calO_\frakp \oplus y \calO) \perp M$ such that $\Phi(x,x) = \Phi(y,y) = 0$
and $L'_\frakp = (x \frakP^{-1} \oplus y \frakP) \perp M$.
Let $\sigma \in \End_{E_\frakp}(V_\frakp)$ such that $\sigma(x) = -\pi^{-1} x$, $\sigma(y) = \pi y$ and $\sigma(z) = z$ for all $z \in M$.
Then $\sigma \in \GU(V_\frakp, \Phi)$ is an isometry between $L_\frakp$ and $L'_\frakp$ with determinant $-1$.
\end{proof}

The group $J$ is infinite, which makes Theorem \ref{Th:Shimura} difficult to use in practise.
For algorithmic purposes, there is a more convenient description of $(J \times \calE(L))/H(L)$.
To this end, fix some fractional ideals $\frakA_1 = \calO, \frakA_2, \dots, \frakA_r$ of $\calO$ which represent the cosets in $C/C_0$.
For $1 \le i,j \le r$ let $1 \le k(i,j) \le r$  be the unique index such that $ \frakA_i \frakA_j \frakA_{k(i,j)}^{-1} \in C_0$.
Remark \ref{rem:iso} shows that there exists some $\alpha_{i,j} \in E^*$ with $\Nr(\alpha_{i,j}) = 1$ such that
\[ \alpha_{i,j} \frakA_i \ol{\frakA_i}^{-1} \frakA_j \ol{\frakA_j}^{-1} = \frakA_{k(i,j)} \ol{\frakA_{k(i,j)}}^{-1} \:. \]
Then $(\alpha_{i,j})$ defines a 2-cocycle $C/C_0 \times C/C_0 \to \calE(L) / R(L)$.
Let $G(L)$ be the corresponding central extension of $\calE(L) / R(L)$ by $C/C_0$, i.e. $G(L)$ is the cartesian product $C/C_0 \times \calE(L) / R(L)$ equipped with the multiplication 
\[ ( [\frakA_i], x ) * ([\frakA_j], y) = ([\frakA_{k(i,j)}], \alpha_{i,j} xy) \:. \]
The example at the end of this section shows that $G(L)$ does not need to be a split extension of $\calE(L) / R(L)$ by $C/C_0$.

\begin{lemma}[Shimura]\label{GL}
The map
\begin{align*}
\psi \colon (J \times \calE(L)) / H(L) &\to G(L), \; \\
(\alpha \frakA_i \ol{\frakA_i}^{-1}, x) \cdot H(L) &\mapsto ([\frakA_i] \cdot C_0, \alpha^{-1} x \cdot R(L))
\end{align*}
with  $\alpha \in E^*$ an arbitrary element of norm $1$, is an isomorphism of groups. 
In particular, the number of special genera in $\gen(L)$ equals 
\[ \# G(L) = [C:C_0] \cdot [\calE(L) : R(L)] = [J:J_0] \cdot [\calE(L) : R(L)] \:. \]
\end{lemma}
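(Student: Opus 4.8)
The plan is to verify by hand that $\psi$ is a well-defined group isomorphism and then to read off the cardinality formula. The one structural input I would use repeatedly is the isomorphism $C/C_0 \isom J/J_0$ of Remark~\ref{rem:iso}: since $\frakA_1, \dots, \frakA_r$ represent $C/C_0$, the ideals $\frakA_i \ol{\frakA_i}^{-1}$ represent $J/J_0$, so every $\frakB \in J$ admits a representation $\frakB = \alpha\, \frakA_i \ol{\frakA_i}^{-1}$ with $\alpha \in E^*$ of norm~$1$. Here the index $i$ is uniquely determined, whereas $\alpha$ is unique only up to a factor in $\calE_0(\calO) = \{u \in \calO^* \mid u\ol u = 1\}$, since two norm-one elements of $E^*$ generating the same $\calO$-ideal differ by such a unit. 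Note also that the image of $\calE_0(\calO)$ in $\calE(L)$ is exactly $R(L)$, directly from the definition of $R(L)$.

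The main obstacle is well-definedness, and I would treat the two sources of ambiguity separately. If $\alpha$ is replaced by $\alpha u$ with $u \in \calE_0(\calO)$, the second coordinate of $\psi$ is multiplied by the image of $u^{-1}$, which lies in $R(L)$ and hence is trivial in $\calE(L)/R(L)$. If instead $(\frakB, x)$ is replaced by $(\frakB, x)\cdot\bigl(f\calO, (f\calE(L_\frakp))_\frakp\bigr)$ with $\Nr(f) = 1$, then $f\calO \in J_0$ leaves the index $i$ unchanged while replacing $\alpha$ by $\alpha f$ and $x$ by the product of $x$ with the image of $f$; consequently $\alpha^{-1}x$ is unchanged in $\calE(L)$. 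Thus $\psi$ depends only on the class modulo $H(L)$. This step is delicate precisely because it is here that the definitions of $J_0$, $H(L)$ and $R(L)$ must interlock, and the fact that $\calE_0(\calO)$ maps into $R(L)$ is what makes everything fit.

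Next I would check the homomorphism property. For elements represented by $(\alpha, i, x)$ and $(\beta, j, y)$, the product in the domain has first coordinate $\alpha\beta\,\frakA_i\ol{\frakA_i}^{-1}\frakA_j\ol{\frakA_j}^{-1}$, which the defining relation for $\alpha_{i,j}$ rewrites as $(\alpha\beta\alpha_{i,j}^{-1})\,\frakA_{k(i,j)}\ol{\frakA_{k(i,j)}}^{-1}$. Hence the product is represented by the norm-one element $\alpha\beta\alpha_{i,j}^{-1}$ at index $k(i,j)$, and its image under $\psi$ is $\bigl([\frakA_{k(i,j)}]C_0,\ \alpha_{i,j}\alpha^{-1}\beta^{-1}xy\, R(L)\bigr)$, which is exactly $([\frakA_i]C_0, \alpha^{-1}x R(L)) * ([\frakA_j]C_0, \beta^{-1}y R(L))$ under the twisted multiplication of $G(L)$. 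Since the domain is a group and $\psi$ preserves the operation, transport of structure along $\psi$ reproves that $G(L)$ is a group; equivalently $(\alpha_{i,j})$ satisfies the $2$-cocycle identity in $\calE(L)/R(L)$, as already noted.

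It remains to prove bijectivity and compute the order. Surjectivity is immediate, since $([\frakA_i]C_0, z\,R(L))$ is the image of $(\frakA_i\ol{\frakA_i}^{-1}, z)\cdot H(L)$. For injectivity, suppose $\psi\bigl((\frakB,x)H(L)\bigr)$ is trivial; then $[\frakA_i] \in C_0$ forces $i=1$ and $\frakB = \alpha\calO$, while $\alpha^{-1}x \in R(L)$ means that $x$ is the image of $\alpha e$ for some $e \in \calO$ with $\Nr(e)=1$. Putting $f = \alpha e$ gives $\Nr(f)=1$, $f\calO = \frakB$ and image $f = x$, so $(\frakB,x) \in H(L)$. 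Hence $\psi$ is an isomorphism. As a set $G(L)$ is the cartesian product $C/C_0 \times \calE(L)/R(L)$, so $\#G(L) = [C:C_0]\,[\calE(L):R(L)]$, and $[C:C_0] = [J:J_0]$ by Remark~\ref{rem:iso}; combining with the bijection of Theorem~\ref{Th:Shimura} identifies $\#G(L)$ with the number of special genera in $\gen(L)$.
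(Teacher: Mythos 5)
Your proof is correct, but it is genuinely more self-contained than the paper's: the paper disposes of well-definedness and bijectivity with a citation to Shimura [5.28] and notes only that the homomorphism property holds ``by the choice of $G(L)$'', whereas you verify everything from the definitions. The ingredients you isolate are exactly the ones that make the citation work: Remark~\ref{rem:iso} guarantees that the $\frakA_i\ol{\frakA_i}^{-1}$ represent $J/J_0$, so every element of $J$ is $\alpha\,\frakA_i\ol{\frakA_i}^{-1}$ with $i$ unique and $\alpha$ unique up to $\calE_0(\calO)$, whose image in $\calE(L)$ is precisely $R(L)$ (note that $e\in\calO$ with $\Nr(e)=1$ is automatically a unit, so $R(L)$ really is that image); and $H(L)$ is the graph of $e\mapsto(e\calO,\mathrm{im}(e))$ on global norm-one elements, which is what both the second well-definedness check and your reassembly $f=\alpha e$ in the injectivity argument exploit. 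Your computation for multiplicativity matches the twisted product $([\frakA_i],x)*([\frakA_j],y)=([\frakA_{k(i,j)}],\alpha_{i,j}xy)$ on the nose, and your remark that bijectivity plus preservation of the operation yields the cocycle identity for $(\alpha_{i,j})$ by transport of structure is a small bonus the paper leaves implicit when it calls $G(L)$ a central extension. The trade-off is length: you get a proof independent of Shimura's book, while the paper keeps the lemma to two lines by outsourcing the set-theoretic bijection.
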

\begin{proof}
The map is well-defined and bijective by \cite[5.28]{Shimura}. It is a group homomorphism by the choice of $G(L)$.
\end{proof}


The group $G(L)$ and the isomorphism $\psi$ from Lemma \ref{GL} yield the following method to decompose a genus into its special genera.

\begin{alg}\mbox{}
\begin{algorithmic}[1]
\Require{An $\calO$-lattice $L$ in a hermitian space $(V, \Phi)$ over $E$.}
\Ensure{A set $S$ of $\calO$-lattices in $V$ representing the special genera in $\gen(L)$.}
\State Compute the groups $\calE(L) / R(L)$ and $C/C_0$.
\IIf{$[\calE(L) : R(L)] = [C:C_0] = 1$}
  \Return $\{L\}$
\EndIIf
\State Using Lemma~\ref{GL}, find prime ideals $\frakP_1,\dots,\frakP_t$ of $\calO$ such that
\begin{enumerate}
\item $\{ g_1,\dots,g_t \}$ generates $G(L)$ where $g_i = \psi((\frakP_i \ol{\frakP_i}^{-1}, c(\frakP_i)) \cdot H(L))$.
\item $(V_{\frakP_i \cap \calo}, \Phi)$ is isotropic and  $L_{\frakP_i \cap \calo}$ is modular.
\item $2 \notin \frakP_i$ if $\frakP_i$ is ramified over $K$.
\end{enumerate}
\For{$1 \le i \le t$}
\State Set $L_{i,-1} = L_{i, 0} = L$.
\State Let $o_i$ be the order of $g_i$ in $ G(L) / \langle g_1,\dots,g_{i-1} \rangle .$
\For{$1 \le j < o_i$}
\State
Let $L_{i,j}$ be a $\frakP_i$-neighbour of $L_{i,j-1}$ different from $L_{i, j-2}$.
\EndFor
\EndFor
\State Set $\frakA = \prod_{i=1}^t \frakP_i^{o_i-1}$.
\State \Return $S = \{ \frakA L + (\ol{\frakA}^{-1} L \cap \bigcap_{i=1}^t L_{i, e_i}) \mid 0 \le e_i < o_i \}$.
\end{algorithmic}
\end{alg}

\begin{proof}[Proof of correctness]
The lattice $M = \frakA L + (\ol{\frakA}^{-1} L \cap \bigcap_{i=1}^t L_{i, e_i})$ in $S$ satisfies
\[ 
M_\frakp = \begin{cases}
  (L_{i, e_i})_\frakp & \text{if }\frakp = \frakP_i \cap \calo,\\
  L_\frakp & \text{otherwise.}
\end{cases}
\]
Thus $G(L) = \{ \psi(\Psi(M)) \mid M \in S \}$ and $\# S = \prod_i o_i = \# G(L)$.
Hence Theorem~\ref{main} and Lemma~\ref{GL} imply that the lattices in $S$ represent each special genus in $\gen(L)$ exactly once.
\end{proof}

The previous algorithms shows how to split a genus into special genera.
To decompose a genus into isometry classes, it remains to describe how to decompose a special genus into isometry classes:

\begin{remark}\label{rem:split}
\begin{enumerate}
\item Suppose $(V, \Phi)$ is definite. Fix a maximal ideal $\frakP$ of $\calO$ such that $(\frakP \ol{\frakP}^{-1}, c(\frakP)) \in H(L)$.
Every isometry class in $\sgen(L)$ has a representative $M$ such that there exists a sequence $L = M_0, M_1, \dots, M_t = M$ where $M_i$ is a $\frakP$-neighbour of $M_{i-1}$, cf. \cite[Corollary 2.7]{Schiemann}.
Conversely, any lattice in such a chain of $\frakP$-neighbours lies in $\sgen(L)$ by the choice of $\frakP$.
So one can decompose $\sgen(L)$ be computing iterated $\frakP$-neighbours. One only needs a method to decide if two definite hermitian $\calO$-lattices are isometric.
The latter can be done by the Plesken-Souvignier algorithm see \cite{PS} and \cite[Section 4.2]{Schiemann}.
\item If $(V, \Phi)$ is indefinite then strong approximation shows that $\sgen(L)$ is a single isometry class, see for example \cite[Corollary~5.1.4]{Kirschmer}.
\end{enumerate}
\end{remark}

\begin{example}\label{ex}
Let $E = \Q(\sqrt{-17})$ and $K = \Q$.
The different of $\calO = \Z[\sqrt{-17}] $ is $\frakP_2^2 \frakP_{17}$ where $\frakP_2$ and $\frakP_{17}$ denote the prime ideals of $\calO$ over $2$ and $17$ respectively.
Fix some prime ideal $\frakP_3$ of $\calO$ over $3$.
The class group  $C$ of $\calO$ is isomorphic to $\Z/4\Z$ and the subgroup $C_0$ has order $2$.\\
Let $L$ be the free hermitian $\calO$-lattice with basis $(x,y)$ 
and associated Gram matrix 
\[
\begin{pmatrix}
102 & \sqrt{-17} \\
-\sqrt{-17} & 0
\end{pmatrix} \:.
\]
Then $L_2 \isom H(0)$ and $L_{17} \isom H(1)$.
Theorem \ref{main} shows that $\calE(L) \isom \Z/2\Z \times \Z/2\Z$. The group $R(L)$ has order $2$ and is diagonally embedded into $\calE(L)$.
Hence the group $G(L)$ has order $[C:C_0] \cdot [\calE(L) : R(L)] = 4$.
Using Lemma \ref{GL}, one checks that $G(L) \isom \Z/4\Z$ is generated by $\psi(\frakP_3 \ol{\frakP}_3^{-1}, 1)$.\\
For $0 \le i \le 3$ let $L_i = \frakP_3^i x \oplus \ol{\frakP_3}^{-i} y$. Then $L_{i}$ is a $\frakP_3$-neighbour of $L_{i-1}$ such that $[L : L_i]_\calO = (\frakP_3 \ol{\frakP}_3)^{-i}$.
Thus the four special genera (or isometry classes) in the genus of $L$ are represented by the lattices $L, L_1, L_2, L_3$.
\end{example}

\subsection*{Acknowledgment}
The author would like to thank S.~Brandhorst for his valuable comments.

\bibliography{det}

\end{document}